\newtheorem{theorem}{Theorem}
\newtheorem{prop}[theorem]{Proposition}
\newtheorem{question}{Question}
\newtheorem{lemma}[theorem]{Lemma}
\theoremstyle{definition}
\newcounter{tenumerate}
\def\P{\mathbb{P}}
\newcommand{\deq}{\stackrel{\scriptscriptstyle\triangle}{=}}
\renewcommand{\epsilon}{\varepsilon}
\newcommand{\E}{{\mathbb E}}
\newcommand{\remove}[1]{}
\renewcommand{\leq}{\leqslant}
\renewcommand{\geq}{\geqslant}
\def\XXint#1#2#3{{\setbox0=\hbox{$#1{#2#3}{\int}$}
\vcenter{\hbox{$#2#3$}}\kern-.5\wd0}}
\begin{document}

\title{Sensitivity of mixing times}
\author{Jian Ding \thanks{Partially supported by NSF grant DMS-1313596.}\\
University of Chicago  \and Yuval Peres
\\Microsoft Research}

\date{}

\maketitle

\begin{abstract}
In this note, we demonstrate an instance of bounded-degree graphs of size $n$, for which the total variation mixing time for the random walk is decreased by a factor of
$\log n/ \log\log n$ if we multiply the edge-conductances by bounded factors in a certain way.
\end{abstract}
\section{Introduction}
Given a network $G = (V, E)$, where each edge $e\in E$ is endowed with a conductance $c_{u, v}>0$ (the default choice for $c_{u, v}$ is 1), a (lazy) random walk on $G$ repeatedly does the following: when the current state is $v\in V$,  the random walk will stay at $v$ with probability $1/2$ and move to vertex $u$ with probability $c_{u, v}/(2\sum_{w\sim v} c_{w, v})$ for all $u\sim v$. Let $P^t(\cdot, \cdot)$ be the transition probability for the random walk. Then the total variation mixing time with respect to starting node $v$ is defined by (see \cite{AF, LPW09} for more background)
$$t_{\mathrm{mix}}(G, v) = \min\{t :\|P^t(v, \cdot) - \pi(\cdot)\|_{\mathrm{TV}} \leq \frac{1}{4} \}\,,$$
where $\pi$ is the stationary measure for the random walk. Here $\|\mu - \nu\|_\mathrm{TV} \deq \frac{1}{2}\sum_{x\in \Omega} |\mu(x) - \nu(x)|$ is the total variation distance between distributions $\nu$ and $\mu$ supported on $\Omega$. We define the (total variation) mixing time of the graph $G$ by $t_{\mathrm{mix}}(G) = \max_{v\in V} t_{\mathrm{mix}}(G, v)$.

In this note, we consider the sensitivity of the total variation mixing time when the edge conductances are multiplied by bounded factors. We say a family of graphs $\{G_n\}$ is {\bf robust} if for every constant $C>0$ there exists a constant $K>0$ such that if we multiply the edge conductances by  a factor of at most $C$ on these graphs, the corresponding mixing times are preserved up to a factor of $K$. Our main result is the following theorem.
\begin{theorem}\label{thm-main}
There exists a family of uniformly bounded-degree graphs that is not robust. Furthermore, there exists a sequence of graphs $\{G_n\}$ with maximal degrees bounded by 10 and  $|V(G_n)| = n$ as well as a rule to change the edge-conductance up to a factor of 2, such that the total variation mixing times in $G_n$ will change by a factor of at least $c \log n/\log\log n$, where $c$ is an absolute constant.
\end{theorem}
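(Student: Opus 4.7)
The plan is to exhibit an explicit bounded-degree graph $G_n$ together with a rule to modify its edge conductances by a factor of at most $2$, and to prove that the TV mixing time changes by the claimed factor of $\Omega(\log n/\log\log n)$. The guiding principle is that by Rayleigh's monotonicity the spectral gap is robust under bounded conductance changes (so the relaxation times under the original and modified conductances agree up to a constant factor), while the TV mixing time can exceed the relaxation time by a factor of up to $\log(1/\pi_{\min}) = \Theta(\log n)$ for bounded-degree graphs through the inequality $t_{\mathrm{mix}} \le t_{\mathrm{rel}} \log(1/\pi_{\min})$. The construction must realise (most of) this extra factor under one conductance assignment while eliminating it under the other.

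\textbf{Construction.} I would take $G_n = H \cup \Gamma$, where $H$ is a bounded-degree expander on $n - m$ vertices (so that $\pi_c(H) \to 1$ and mixing on $H$ alone takes $O(\log n)$ steps), and $\Gamma$ is a specially built bounded-degree pendant gadget of $m = \Theta(\log^2 n/\log\log n)$ vertices attached at a single vertex of $H$. The gadget $\Gamma$ has a distinguished tip vertex $v^\ast$ from which the walker's escape time to $H$ will differ dramatically between the two conductance assignments. Concretely, the baseline conductance $c$ is chosen so that from $v^\ast$ the walker requires $\Omega(\log^2 n/\log\log n)$ expected steps to reach $H$, while under $c'$ (differing from $c$ by at most a factor of $2$ on each edge) the same escape takes $O(\log n/\log\log n)$ steps. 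Since $\pi_c(\Gamma) = o(1)$, the TV distance from $v^\ast$ to $\pi_c$ remains above $1/4$ until the walker hits $H$, so the two escape-time estimates transfer directly to TV mixing times.

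\textbf{Bounds and ratio.} The hitting-time estimates for the gadget give $t_{\mathrm{mix}}(G_n, c) \ge \Omega(\log^2 n/\log\log n)$ by choosing $v^\ast$ as the starting vertex, and $t_{\mathrm{mix}}(G_n, c') \le O(\log n)$ by combining the $O(\log n/\log\log n)$-step gadget escape with the $O(\log n)$ expander-mixing bound via a standard coupling or $L^2$ argument. The ratio is $\Omega(\log n/\log\log n)$, as required.

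\textbf{Main obstacle.} The technical crux lies in designing the gadget $\Gamma$ and its baseline conductance $c$: we need the escape time from $v^\ast$ to change by a factor of $\Theta(\log n/\log\log n)$ under a factor-$2$ perturbation of the conductances. A simple unit-weight pendant path is insufficient, because bounded conductance changes on a length-$m$ path can only create a drift of order $O(1/m)$ per step, which leaves the escape time at $\Theta(m^2)$ up to constants. Consequently $\Gamma$ must have richer structure: either pre-loaded geometric-type conductance weights on its edges so that factor-$2$ perturbations can flip the local drift at each step, or else a hierarchical arrangement of $\Theta(\log n/\log\log n)$ bounded-degree sub-gadgets (each of size $\Theta(\log n)$) whose individual escape times are sensitive to the conductance of a designated ``valve'' edge. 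The main work will be to verify that a suitable gadget exists --- giving the lower-bound trap under $c$ and the upper-bound escape under $c'$ --- and to carry out the hitting-time analysis that turns per-edge conductance changes into an overall $\Omega(\log n/\log\log n)$ contrast in the mixing time.
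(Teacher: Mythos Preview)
Your high-level framework---attach a pendant gadget to a large expander so that the TV mixing time is governed by the hitting time of the expander, and then arrange for that hitting time to be sensitive to a factor-$2$ conductance change---is exactly the skeleton the paper uses. But the proposal stops precisely at the point where the real content lies: you explicitly flag the gadget design as the ``main obstacle'' and leave it open, and neither of your two suggested mechanisms would close the gap.

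Pre-loaded geometric weights on a path are not compatible with the statement (the baseline graph is an unweighted bounded-degree graph), and even if they were allowed they do not give the needed contrast: a factor-$2$ perturbation of $c_i = r^i$ only jitters the local ratio $c_i/c_{i-1}$ within $[r/2,2r]$ and cannot flip the sign of the drift unless $r$ is already within a factor $2$ of $1$, in which case the escape time changes only by a constant. Your ``hierarchical valve'' idea is not specified enough to evaluate, and in fact any gadget attached at a \emph{single} vertex of the expander and of size $m=\Theta(\log^2 n/\log\log n)$ faces the same difficulty you already diagnosed for the path: bounded perturbations of conductances on a bounded-degree graph of size $m$ typically change hitting times only by constants, because effective resistances and stationary weights move by at most constant factors.

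The mechanism you are missing, and which the paper exploits, is \emph{sensitivity of harmonic measure} rather than sensitivity of escape time from a fixed structure. The paper's gadget is a full binary tree of depth $K\asymp\log n$ attached to the expander along all $2^K$ leaves. Certain ``balanced'' vertices (those whose root-path has roughly equal numbers of left and right turns) are decorated with $3$D tori of volume $K$, each causing a $\Theta(K)$ delay when visited. Under unit conductances the walk down the tree is left/right symmetric and so visits $\Theta(K)$ balanced vertices, giving hitting time $\Theta(K^2)$. Changing every left-child edge to conductance $2$ biases the walk so that its root-path is highly unbalanced; a large-deviation estimate shows the expected number of balanced vertices visited drops to $O(\log K)$, and the hitting time collapses to $O(K\log K)$. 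Thus the factor-$2$ change does not alter how long it takes to traverse the gadget per se, but rather \emph{which} vertices are traversed---steering the walk away from the traps. This idea (used earlier by Benjamini for instability of the Liouville property) is the heart of the construction, and your proposal does not contain it or an alternative of comparable strength.
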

Our example in the preceding theorem is almost optimal (except for the $\log\log n$ term), due to the well known fact that $\lambda^{-1}\leq t_{\mathrm{mix}(G)} = O(\lambda^{-1} \log \frac{1}{\min_x \pi(x)})$ (where $\pi$ is the stationary measure) and that the spectral gap $\lambda$ will be preserved up to constant under the aforementioned perturbation.  There are numerous works aiming at sharp geometric bounds on mixing times such as the Lov{\'a}sz-Kannan bound (\cite{LK99}) and the Fountoulakis-Reed bound \cite{FR07, FR08}, where an upper bound on the mixing time is derived in terms of the expansion profile of the graph; these bounds on mixing times involving the geometry are robust under the conductance perturbation. Theorem~\ref{thm-main} provides a cautionary note on the possibility of developing geometric bounds on mixing times, and implies that a tight (up to constant) bound for the mixing time of random walks on general weighted graphs cannot be robust under bounded perturbation of the edge conductances.

In contrast with Theorem~\ref{thm-main}, many well-known families of graphs are robust. Robustness of general trees was established in \cite{PS11};
some other examples are collected in the next proposition.
\begin{prop}\label{prop}
The following families of graphs are robust:  (1) Tori $\{\mathbb{Z}_n^d: n\in \mathbb{N}\}$ for every fixed $d\in \mathbb{N}$; (2) Erd\H{o}s-R\'enyi supercritical random graph $\mathcal{G}(n, c/n)$ for a fixed $c>1$; (3) Erd\H{o}s-R\'enyi critical random graph $\mathcal{G}(n, 1/n)$; (4) Hypercubes $\{0, 1\}^n$.
\end{prop}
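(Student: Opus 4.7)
\textbf{Proof proposal for Proposition~\ref{prop}.} The common principle is that multiplying edge conductances by a factor in $[C^{-1},C]$ scales the Dirichlet form $\mathcal{E}(f,f)=\sum_{u\sim v}c_{uv}(f(u)-f(v))^2$ and the stationary weights $\pi(v)$ by factors in $[C^{-2},C^2]$. Consequently the spectral gap $\lambda$, the log-Sobolev constant $\alpha$, the Nash/Faber--Krahn profile, effective resistances, and $\log(1/\pi_*)$ are all preserved up to $C$-dependent constants. For each family, one must therefore establish both upper and lower bounds on $t_{\mathrm{mix}}$ in terms of such robust quantities.

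For the torus $\mathbb{Z}_n^d$ with $d$ fixed, $t_{\mathrm{mix}}=\Theta(n^2)=\Theta(\lambda^{-1})$, so the lower bound is from the spectral gap. For the upper bound I would invoke Delmotte's Gaussian heat-kernel estimates: uniform local volume doubling and the local Poincar\'e inequality persist under bounded perturbation, so the heat kernel obeys $p_t(x,y)\le C' t^{-d/2}\exp(-c\,\dist(x,y)^2/t)$ for $t\le n^d$, yielding $t_{\mathrm{mix}}=O(n^2)$. For the hypercube $\{0,1\}^n$ one has $\alpha=\Theta(1/n)$, so the Diaconis--Saloff-Coste bound $t_{\mathrm{mix}}=O(\alpha^{-1}\log\log(1/\pi_*))$ gives $O(n\log n)$; the matching lower bound $\Omega(n\log n)$ comes from Wilson's method applied to the Hamming-distance distinguishing statistic (equivalently a coupon-collector argument for coordinate flips), which is robust since it depends only on bounded per-coordinate transition rates.

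For supercritical $\mathcal{G}(n,c/n)$, Fountoulakis--Reed and Benjamini--Kozma--Wormald proved $t_{\mathrm{mix}}=\Theta(\log^2 n)$ on the giant component with high probability, by decomposing the giant into a 2-core of $\Theta(\log n)$ diameter with an expansion-derived Poincar\'e constant, plus pendant trees of depth $O(\log n)$; the tree pieces are robust by the Peres--Sousi result cited in the text, and the 2-core's expansion profile is manifestly robust. For critical $\mathcal{G}(n,1/n)$, Nachmias--Peres proved $t_{\mathrm{mix}}=\Theta(n)$ on the largest component with high probability via the Lyons--Peres--Schramm upper bound $t_{\mathrm{mix}}\lesssim \mathrm{vol}\cdot\reff$, matched by the spectral-gap lower bound; both ingredients are robust since volume and effective resistance change by bounded factors under perturbation.

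The main obstacle is the torus case: the crude bound $t_{\mathrm{mix}}=O(t_{\mathrm{rel}}\log(1/\pi_*))$ would cost an extra $\log n$, so one genuinely needs Nash- or heat-kernel-type theory, and verifying its hypotheses under perturbation requires some care (in particular, the local Poincar\'e inequality must be derived directly from the conductance bounds rather than from Fourier analysis). For the random-graph cases, the issue is to verify that the structural ingredients---expansion, effective resistance, and volume---invoked in the existing proofs are themselves invariant under the perturbation, which is straightforward but must be checked.
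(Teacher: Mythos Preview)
Your proposal is correct in outline and matches the paper closely for cases (3) and (4). The principal difference is case (1): the paper obtains the $O(n^2)$ upper bound for $\mathbb{Z}_n^d$ not via heat-kernel or Nash-type machinery but simply by invoking the Lov\'asz--Kannan conductance-profile bound, which is manifestly robust under bounded conductance perturbation and yields $O(n^2)$ directly from the torus isoperimetric profile. This sidesteps what you flagged as the ``main obstacle''; your Delmotte route also works but is heavier than necessary. For case (2), the paper again applies the Fountoulakis--Reed conductance-profile bound as a black box for the upper bound, rather than re-deriving the 2-core/pendant-tree decomposition, and for the lower bound it observes only that the giant component contains an induced path of length $\Theta(\log n)$, which immediately gives $\Omega(\log^2 n)$ and is trivially robust. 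So the paper's proof is shorter because it consistently appeals to off-the-shelf isoperimetric bounds (Lov\'asz--Kannan, Fountoulakis--Reed) whose robustness is immediate, whereas your approach reproves more of the underlying structure; your version would in principle give finer control but is not needed here.
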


In view of Proposition~\ref{prop}, it is interesting to study how generally does robustness holds. Specifically, the following questions are open.
\begin{question}
Are transitive graphs robust?
\end{question}
A special and important class of Markov chains is the Glauber dynamics for spin systems. In this case, we could focus on perturbation of the updating rate: that is, instead of selecting a uniform spin to update, one selects each spin with probability of between  $c_1/n$ and $c_2/n$,
where $n$ is the size of the underlying graph and $c_1<c_2$ are positive constants.
\begin{question}
Is Glauber dynamics for spin systems robust under bounded perturbation of the updating rate?
\end{question}
One  case where a positive answer to this is easily obtained is when the Dobrushin contraction condition is satisfied \cite{AH87} (see also Theorem 15.1 in \cite{LPW09}).

Finally, there are other notions of mixing times involving different distances between probability measures, one of which is the $L_\infty$ distance $\sup_{x, y\in V}|\frac{P^{t}(x, y)}{\pi(y)} - 1|$.  The robustness of the $L_\infty$ mixing time was studied in Kozma \cite{Kozma}, where he proved that $L_\infty$ mixing time is preserved up to a factor of $\log\log n$. Our result can be seen as a complementary result to \cite{Kozma}.
Note that geometric bounds on $L_\infty$ mixing times have also been developed. In Morris and Peres \cite{MP05}, it was shown that the Lov{\'a}sz-Kannan bound is also effective for $L_\infty$ mixing time, and \cite{MP05} was improved by Goel, Montenegro and Tetali \cite{GMT06} using spectral profile (as opposed to expansion profile).

\section{Constructions and proofs}

Our construction is based on two simple observations. First, by changing the conductance up to a constant factor, one could drastically change the harmonic measure (this idea was used before by Benjamini \cite{Benjamini91} to study  instability of the Liouville property). Thus, one could then decorate the graph in a subset of vertices to incur a delay for the random walk such that the delay will differ significantly before and after changing the conductances. Second, the hitting time in one graph can always be translated to the mixing time for a larger graph. So the sensitivity of hitting times results in the sensitivity of mixing times (on a larger graph).

We now describe the construction of our graphs. We start with a simpler construction which gives a factor of $\sqrt{\log n}$ for mixing times under a certain perturbation. First take a binary tree $T$ rooted at $o$ of height $K$. We distinguish the two children of a parent by \emph{left} and \emph{right} child. Denote by $L$ and $R$ the collection of left and right children in $T$, respectively. For $i\in [K]:=\{1, \ldots, K\}$, denote by $H_i \subset T$ the collection of vertices in the $i$-th level. For $u, v\in T$, denote by $\Gamma_{u, v}$ the collection of vertices that are in the unique path between $u$ and $v$. Define
$$B = \{w\in T : K/4 \leq |\Gamma_{o, w}| \leq K/2, \mbox{ and } ||\Gamma_{o, w} \cap L| - |\Gamma_{o, w} \cap R|| \leq \sqrt{K}\}$$
to be the collection of \emph{balanced} nodes in $T$.  For every $u\in B$, we attach a 3D torus $B_u$ of volume $K$ to $u$. That is to say, $u\in B_u$ and $B_u \cap T = \{u\}$. Furthermore, $B_u \cap B_w = \emptyset$ for $u \neq w$. Denote by $T^*_K$ the graph obtained at this point. Finally, we attach a 3-regular expander of size $K^2 2^K$ to the set of leaves $H_K$ in $T$. That is to say, we attach an expander $G^* = (V^*, E^*)$ such that $V^* \cap T^*_K = H_K$. We denote by $G_1 = (V_1, E_1)$ the final graph obtained from our construction.

\begin{figure}
\includegraphics[width=4in]{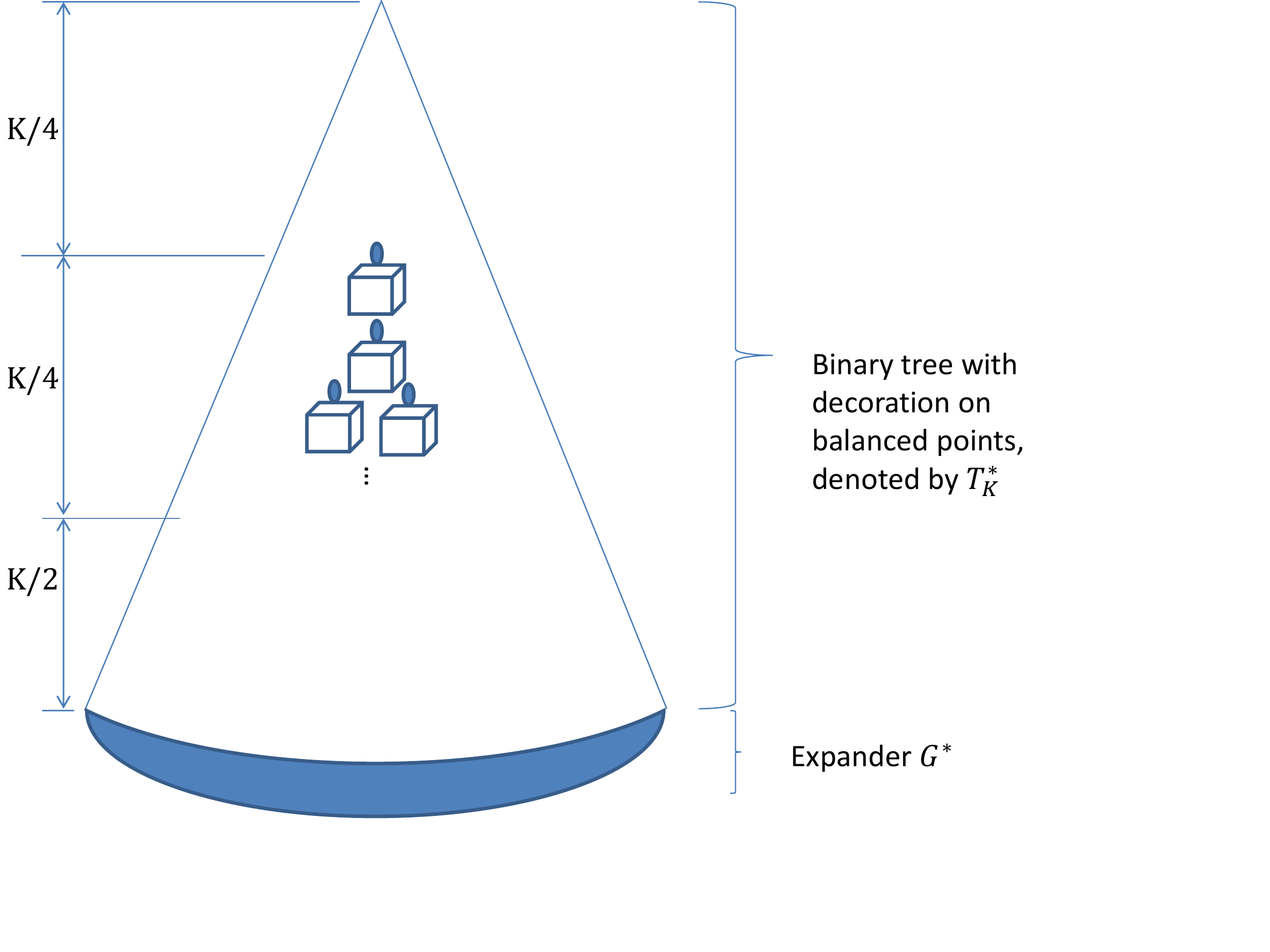}\includegraphics[width=4in]{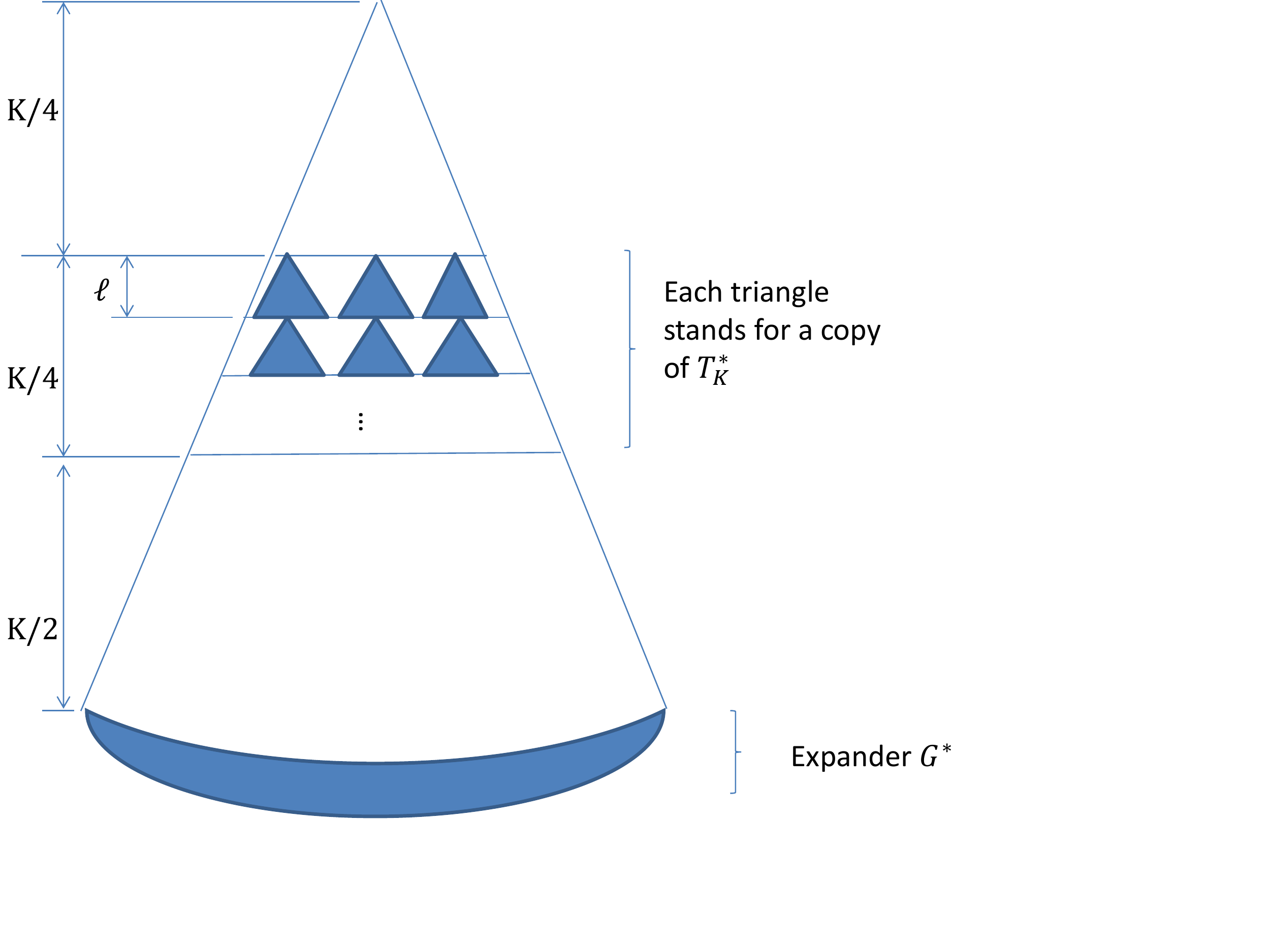}
\caption{On the left, the balanced nodes between levels $K/4$ and $K/2$ in a binary tree of depth $K$ are decorated with 3D tori of volume $K$. This yields $T^*_K$ and we add an expander $G^*$ of volume $K^2 2^K$. On the right, a binary tree of depth $K$ is decomposed into numerous binary trees of depth $\ell$, and those between levels $K/4$ and $K/2$ are replaced by copies of $T^*_\ell$, and then an expander is added as before.}
\label{ree1}
\end{figure}

We will only give a sketch for the mixing time comparison for $G_1$, as a more detailed analysis will be given for a more delicate construction below. It is not hard to see that the mixing time on $G_1$ should be at least the hitting time from the root to the leaf set in $T$. This is because when the random walk on $T$ travels down from the root it typically visits at least $K/100$ balanced points, the attached 3D torus will cause a delay on the hitting time by a factor of $K$. That is to say, the hitting time is at least of order $K^2$, and so is the mixing time. Now, if we change the conductance on every edge $(u, v)\in T$ with $v\in L$ to 2, the mixing times will still be governed by the maximal hitting time to the leaf set. Also, the starting point which maximizes the aforementioned hitting time should be a node in a 3D torus attached to a balanced node deep inside the tree, such that the random walk on $T$ will typically spend time $O(\sqrt{K})$ at balanced nodes before reaching $H_K$. Thus the hitting time of $H_K$ in $G_1$ is $O(K^{3/2})$ (so is the mixing time). This shows that the mixing times will differ by a factor of order $\sqrt{\log |V_1|}$, completing the sketch.

\medskip

\noindent{\bf Main construction.} We construct the family of graphs which achieves the factor of $\log n/\log\log n$ as stated in Theorem~\ref{thm-main}. Again take a binary tree $T$ rooted at $o$ of height $K$. Write $\ell = 100\log K$, and define $\mathcal{H} = \cup_{j=K/4\ell}^{K/2\ell} H_{j\ell}$. For $v\in \mathcal{H}$, let $T_v$ be the complete binary subtree of $T$ rooted at $v$ of height $\ell$. For each $v\in \mathcal{H}$, define
$$A_v = \{w\in T_v: \ell/4 \leq |\Gamma_{v, w}| \leq \ell/2, \mbox{ and } ||\Gamma_{v, w} \cap L| - |\Gamma_{v, w} \cap R|| \leq \sqrt{\ell}\}$$
to be the collection of balanced nodes in the subtree $T_v$. Denote by $\mathcal{A} = \cup_{v\in \mathcal{H}} A_v$. For every $u\in \mathcal{A}$, we attach a 3D torus $B_u$ of volume $K$ to $u$. Finally, we attach a 3-regular expander $G^* = (V^*, E^*)$ of size $K^2 2^K$ to the set of leaves $H_K$ in $T$. We denote by $G = (V, E)$ the final graph obtained from our construction. Note that $|V| = (1+O(1/K))(K^2 2^K)$ (with room to spare in $O(1/K)$).

Now we specify a rule of changing the edge conductance in $G$ in order to obtain a new network supported on the same edge set as $G$. For every edge $(u, v)\in T$ where $u$ is a parent of $v$, we let the new edge conductance $c_{u, v} = 2$ if $v\in L$. The conductance of all the other edges in $G$ is preserved. We denote by $\tilde G = (V, C)$ this weighted graph (or network). At this point, we completed the construction of our examples, and it remains to verify the mixing times on $G$ and $\tilde G$ differ by a desired factor.

\begin{lemma}\label{lem-original}
The mixing time on $G$ satisfies $t_{\mathrm{mix}}(G) \geq c K^2$ for a certain constant $c>0$.
\end{lemma}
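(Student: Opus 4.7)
The plan is to apply the standard mixing-time lower bound via hitting times of a large-mass set. Since $|V^*| = K^2 2^K$ and $|V| = (1+O(1/K)) K^2 2^K$, we have $\pi(V^*) \ge 1 - O(1/K)$, so for every starting vertex $x$,
\[
\|P^t(x, \cdot) - \pi\|_{\mathrm{TV}} \;\ge\; \pi(V^*) - \P_x(T_{V^*} \le t).
\]
By Markov's inequality it therefore suffices to show $\E_o T_{V^*} \ge cK^2$, where $o$ is the root of $T$.

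The hitting-time bound follows by accounting for time spent inside the attached 3D tori. Two ingredients suffice. First, by transience of simple random walk on $\Z^3$, each torus $B_u$ of volume $K$ has expected excursion time $\Theta(K)$ from $u$, and at each visit to $u$ the walker enters $B_u$ with constant probability (six torus edges versus three tree edges at $u$). Hence
\[
\E_o T_{V^*} \;\ge\; \Omega(K) \cdot \E_o\bigl[\#\{u \in \mathcal A : u \text{ is visited before } T_{V^*}\}\bigr].
\]
Second, I claim the walker visits $\Omega(K)$ distinct balanced nodes in expectation. For $0 \le d < K$, let $\tau_d$ be the first time the walker hits level $d$ of $T$ and put $V_d = X_{\tau_d}$; since the walker must traverse every level to reach the expander, $\tau_d < T_{V^*}$. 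The crucial claim is that $V_d$ is uniformly distributed on $H_d$. Granted this, for each $d = j\ell + d'$ with $j \in [K/(4\ell), K/(2\ell)]$ and $d' \in [\ell/4, \ell/2]$, a Chernoff bound on the i.i.d.\ $\pm 1$ left/right labels along the path from $V_d$'s level-$j\ell$ ancestor gives $\P(V_d \in \mathcal A) \ge c_0$ for an absolute constant $c_0>0$ (the threshold $\sqrt{\ell}$ is of order one standard deviation of a random descent of depth $d' \le \ell/2$). Summing over the $\Theta(K)$ such pairs $(j, d')$, which correspond to distinct levels and hence distinct balanced nodes, completes the count.

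The main obstacle is proving uniformity of $V_d$ on $H_d$. I would project the walk onto $T$ by collapsing each torus $B_u$ to $u$; because any excursion from $u$ into $B_u \setminus \{u\}$ can only return to $T$ at $u$, the projected walker on $T$ is well-defined and changes position only at genuine tree steps. The key observation is that at every interior tree vertex $u$, balanced or not, each of the three tree neighbors (parent, left child, right child) is chosen with probability exactly $1/3$ conditional on the projected walker's next move being to a tree neighbor: the torus at a balanced node attaches identically to $u$ regardless of which of its children is ``left'' and which is ``right'', so it does not break left/right symmetry. Consequently, the left/right label of each downward tree step is an independent fair coin, independent of the push/pop pattern. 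A stack encoding---push on a downward move, pop on an upward one---then identifies $V_d$ at time $\tau_d$ with a stack of length $d$ whose entries are i.i.d.\ uniform $\{L, R\}$ labels, whence $V_d$ is uniform on $H_d$. The remaining inputs (the $\Theta(K)$ torus excursion time, the constant entry probability at a balanced node, and the sojourn-length calculation at a balanced vertex from a short linear recurrence) are routine consequences of transience of the 3D walk.
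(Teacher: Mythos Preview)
Your overall strategy matches the paper's: decompose the walk on $G$ into the tree walk plus torus excursions, show that $\Omega(K)$ balanced nodes are visited in expectation (using the left/right symmetry of the tree to get uniformity on each level), and multiply by the $\Theta(K)$ expected delay per balanced-node visit to obtain $\E_o T_{V^*} \ge cK^2$. The paper counts total visits to each $H_i \cap \mathcal{A}$ rather than only the distinct first-passage vertices $V_d$, but this is cosmetic; your stack argument for the uniformity of $V_d$ on $H_d$ is correct and amounts to what the paper compresses into the words ``by symmetry''.

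There is, however, a genuine gap in the final step. You write ``By Markov's inequality it therefore suffices to show $\E_o T_{V^*} \ge cK^2$,'' but Markov's inequality points the wrong way: it yields $\P_o(T_{V^*} > t) \le \E_o T_{V^*}/t$, an \emph{upper} bound on the tail, whereas you need a \emph{lower} bound on $\P_o(T_{V^*} > t)$ to force $\P_o(T_{V^*} \le t)$ below $\pi(V^*) - \tfrac14$. A large mean alone does not preclude $T_{V^*}$ being small with high probability. The paper closes this gap with a restart argument: pick $v^*$ maximizing $\E_{v^*}\tau$ (so $\E_{v^*}\tau \ge \E_o\tau$), and use the Markov property together with the maximality of $v^*$ to get
\[
\E_{v^*}\tau \;\le\; t + \P_{v^*}(\tau > t)\,\E_{v^*}\tau,
\]
which rearranges to $\P_{v^*}(\tau > t) \ge 1 - t/\E_{v^*}\tau$. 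Taking $t$ a small constant multiple of $K^2$ then gives $\P_{v^*}(\tau > t) \ge \tfrac34$, and combined with $\pi(V^*) \ge 9/10$ this yields $t_{\mathrm{mix}}(G, v^*) \ge cK^2/20$. You should replace the Markov step by this (or by a second-moment/concentration argument for $T_{V^*}$).

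A minor remark: the $\Theta(K)$ expected excursion time in each attached torus follows from the elementary return-time identity $\E_u[\text{return to } u] = (\text{sum of degrees})/\deg(u)$, not from transience of $\Z^3$; transience plays no role in this lemma.
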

\begin{proof}
Let $\tau$ be the hitting time to the set $V^*$ for a random walk $(S_t)$ on $G$. We wish to bound $\E_o \tau$ from below. Let $G'$ be the graph $G$ without attached 3D tori. First, we consider a random walk $(S'_t)$ on $G'$. Denote by $N_i$ the number of times that $(S'_t)$ visits $H_i \cap \mathcal{A}$ before hitting $V^*$, and denote by $\mathcal{N} = \sum_i N_i$. Clearly $(S'_t)$ visits at least one vertex in every $H_i$ before reaching $H_K$. By symmetry, we obtain that
$$\E_o \mathcal{N} = \sum_{i=1}^K \E_oN_i \geq \sum_{i=1}^K \frac{|\mathcal{A} \cap H_i|}{|H_i|} \geq c K\,,$$
for a certain constant $c>0$. Note that $(S_t)$ can be decomposed to $(S'_t)$ and the excursions performed in the attached 3D tori. Recall a standard fact that the expected return time to the origin of a random walk is the total volume normalized by the degree of the origin. Thus, every time $(S_t)$ visits some $v\in \mathcal{A}$, the average time it takes the random walk for the excursion in $B_v$ is at least $K/2$ (and the length of such excursions are independent of the random walk on $G'$). Therefore, we have $\E_o \tau \geq \E_o \mathcal{N} \cdot K/2 \geq cK^2/2$. Let $v^*$ be such that $\E_{v^*} \tau = \max_{v\in V} \E_v \tau$. Thus by Markov property, we have
$$\E_{v^*} \tau \leq c K^2/20 + \P_{v^*}(\tau \geq cK^2/20) \E_{v^*} \tau\,,$$
which yields that
$$\P_{v^*}(\tau \geq cK^2/20) \geq 3/4\,.$$
Since the stationary measure on $V^*$ satisfies that $\pi(V^*) \geq 1 - 1/K \geq 9/10$ for large enough $K$, we obtain that $t_{\mathrm{mix}}(G,v^*) \geq c K^2/20$, as required.
\end{proof}
We now turn to analyze the mixing time on the network $\tilde {G}$.
\begin{lemma}
The mixing time on $\tilde{G}$ satisfies $t_{\mathrm{mix}}(\tilde G) = O(K \log K)$.
\end{lemma}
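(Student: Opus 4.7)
The plan is to prove the stronger statement $\E_v\tau^*=O(K)$ for every $v$, where $\tau^*=\min\{t:S_t\in V^*\}$. Via standard reductions this will yield $t_{\mathrm{mix}}(\tilde G)=O(K)\leq O(K\log K)$. The qualitative reason is that doubling the conductance of every left-child edge gives the walk on $\tilde G$ strong downward drift and bias toward left turns: at any interior tree vertex the (non-lazy) step distribution is $1/4$ up, $1/2$ left-child, $1/4$ right-child. The level process is thus biased with down-probability $3/4$, and each downstep is a Bernoulli$(2/3)$ left turn. Balanced nodes $w\in\mathcal{A}$, the only places where the walk can enter a torus and be delayed by $\Theta(K)$, require $|L-R|\leq\sqrt{\ell}$ along the length-$k\geq \ell/4$ path from the relevant $v\in\mathcal{H}$, while a typical biased trajectory produces $|L-R|\asymp k\gg\sqrt{\ell}$ there. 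So in $\tilde G$ the walk essentially never sees $\mathcal{A}$ before reaching $V^*$.

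First I would decouple torus delays from tree motion. Couple the walk on $\tilde G$ to the walk on $\tilde G'$, where $\tilde G'$ is $\tilde G$ with every $B_u$ deleted, by collapsing each sojourn in some $B_u$ to a single step. Each sojourn has expected length $O(K)$ (return time on a 3D torus of volume $K$), and hitting probabilities between tree vertices coincide in $\tilde G$ and $\tilde G'$. Hence
\[
\E_v\tau^*\leq \E_v[\tau^*\text{ on }\tilde G']+O(K)\cdot \E_v[N_{\mathcal{A}}],
\]
where $N_{\mathcal{A}}$ counts visits to $\mathcal{A}$ before $\tau^*$. The first term is $O(K)$ by gambler's ruin applied to the level process; the crux is to bound $\E_v[N_{\mathcal{A}}]=O(1)$.

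For this I would use reversibility of the killed chain: $G^{V^*}(v,w)=(\pi(w)/\pi(v))\,G^{V^*}(w,v)$, and since all conductances lie in $\{1,2\}$ and degrees are at most $10$, $\pi(w)/\pi(v)=\Theta(1)$. The downward drift gives $G^{V^*}(v,v)=O(1)$ (constant escape probability per excursion from $v$), so $G^{V^*}(w,v)\leq O(1)\cdot \P_w(\tau_v<\tau^*)$. For $v=o$, gambler's ruin on the level process (up-probability $1/4$, down-probability $3/4$) gives $\P_w(\tau_o<\tau^*)=O(3^{-\mathrm{level}(w)})$; since $\mathcal{A}\subset\bigcup_{k\geq K/4}H_k$ and trivially $|\mathcal{A}\cap H_k|\leq 2^k$,
\[
\sum_{w\in\mathcal{A}}G^{V^*}(o,w)\leq O(1)\sum_{k\geq K/4}2^k\cdot 3^{-k}=O((2/3)^{K/4})=o(1).
\]
A similar estimate from any other start vertex (if we start inside a torus we first wait $O(K)$ to exit, which only adds to the hitting-time term, not $N_{\mathcal{A}}$) gives $\max_v\E_v[N_{\mathcal{A}}]=O(1)$, and therefore $\max_v\E_v\tau^*=O(K)$.

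Finally, once the walk reaches $V^*$ it runs on a bounded-degree expander of size $K^2 2^K$, with short $O(1)$-expected-length excursions back into the tree from $H_K$ (the drift at level $K-1$ pushes the walker back to $H_K$ in $O(1)$ steps), so the walk mixes inside $V^*$ in $O(\log|V^*|)=O(K)$ steps. Since $\pi(V^*)\geq 1-O(1/K)$, the standard template---Markov gives $\P_v(\tau^*\leq T_1)\geq 9/10$ for some $T_1=O(K)$, and then $O(K)$ further steps suffice to mix on the expander---yields $t_{\mathrm{mix}}(\tilde G)=O(K)\leq O(K\log K)$. The main obstacle I expect is the Green's function bound above: the reversibility step is essential because direct pathwise bounds on balanced-node visits are messy given the complicated up-and-down behaviour of the walk on the tree.
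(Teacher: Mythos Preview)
Your argument has a concrete error that propagates. The step distribution ``$1/4$ up, $1/2$ left, $1/4$ right'' is only correct when the current vertex is a \emph{right} child; at a left child the parent edge has conductance $2$, so the weights are $2:2:1$ and the up-probability is $2/5$. Consequently the gambler's-ruin bound $\P_w(\tau_o<\tau^*)=O(3^{-k})$ fails uniformly: along the all-left path the decay is only about $0.59^k$, and $\sum_k 2^k\cdot 0.59^k$ diverges. In fact $\sum_{w\in H_k}G^{V^*}(o,w)$ is exactly the expected number of visits to level $k$, which is $\Theta(1)$ by the drift; so pairing the trivial bound $|\mathcal A\cap H_k|\le 2^k$ with \emph{any} uniform bound on $\P_w(\tau_o<\tau^*)$ cannot do better than $\E_oN_{\mathcal A}=O(K)$. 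The missing ingredient is precisely the one you mention in your opening paragraph but then discard: balanced nodes force $|L-R|\le\sqrt\ell$ on a path of length $\ge\ell/4$, while the biased left/right choices make this a large-deviation event. The paper uses this via $\E_uN_u\le O(1)\sum_{i\ge\ell/4}\P(|Z_i|\le\sqrt\ell)=O(K^{-2})$ for a biased Bernoulli sum $Z_i$; this cannot be replaced by a level-process estimate.

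Separately, the claim $\max_v\E_vN_{\mathcal A}=O(1)$ is actually false, so your target $t_{\mathrm{mix}}(\tilde G)=O(K)$ is too strong. If $v$ is a balanced node (or lies in some $B_v$), then after exiting the torus the biased downward walk stays balanced for $\Theta(\sqrt\ell)=\Theta(\sqrt{\log K})$ further levels (the $L-R$ count drifts by $1/3$ per step and needs $\Theta(\sqrt\ell)$ steps to exit the window $[-\sqrt\ell,\sqrt\ell]$), giving $\E_vN_{\mathcal A}=\Omega(\sqrt{\log K})$ and $\E_v\tau^*=\Omega(K\sqrt{\log K})$. The paper handles this start by the crude bound ``time spent in the containing subtree $T_w$ is $O(\ell)=O(\log K)$,'' which is where the $\log K$ factor in the lemma comes from; their later remark about $\log n/\sqrt{\log\log n}$ corresponds to sharpening this to $O(\sqrt{\log K})$, but not to $O(1)$.
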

\begin{proof}
We first show that $t_{\mathrm{mix}}(\tilde G, v) = O(K)$ for all $v\in V^*$. Let $\tau^*$ be the stopping time at which the random walk has visited $V^*$ for $C K$ times, and denote by $\tau_U$ the hitting time of a $U$ for $U\subseteq V$.  Note that $\P_v(\tau_{H_{\mathrm{top}}} < \tau_{V^*}) \leq \mathrm{e}^{-cK}$ for all $v\in V^*$ and a certain $c>0$, where $H_{\mathrm{top}} = \cup_{i=1}^{3K/4} H_i$. This implies that the random walk will not hit $\mathcal{A}$ before $\tau^*$ except with exponentially small probability. Also, on the rare event that the random walk does hit $\mathcal{A}$, it only increases the stopping time $\tau^*$ by an additive term up to $O(K^3)$ on average. Thus, we have $\E_v \tau^* = O(K)$. Next, denoting by $\tilde V^* = V^* \cup \cup_{i=3K/4}^K H_i$, we see that the random walk stays within $\tilde V^*$ with probability at least $1 - \mathrm{e}^{-cK}$. In addition, the majority (above $9/10$ when $K$ is large) of the stationary measure is supported on $V^*$ (and thus also on $\tilde V^*$). Therefore, the mixing time for the whole network is bounded by the mixing time of the induced sub-graph $\tilde G^*$ on $\tilde V^*$ from above up to a multiplicative constant. Furthermore, since the induced subgraph $\tilde G^*$ is also an expander,  we have $t_{\mathrm{mix}}(\tilde G^*) \leq CK$ for a finite constant $C$. Altogether, we have shown $t_{\mathrm{mix}}(\tilde G, v) = O(K)$ uniformly for all $v\in V^*$.

Denote again by $\tau$ the hitting time to $V^*$. In light of the above discussion, it remains to bound $\max_v\E_v \tau$. Analogous to the proof of Lemma~\ref{lem-original}, we consider a random walk $(\tilde S'_t)$ on $\tilde{G}'$, which is the network obtained from $\tilde {G}$ by ignoring the attached 3D tori. Suppose $(\tilde S'_t)$ started at some $v\in H_k\cap T_w$ where $w\in \mathcal{H}$. Note that before hitting $V^*$, the expected number of times that $(\tilde S'_t)$ visits $H_{k-j}$ is $O(\mathrm{e}^{-cj})$ for a certain constant $c>0$ (as the random walk is biased toward the leaves). Therefore, the expected number of times that $(\tilde S'_t)$ visits $\cup_{i=1}^k H_i$ is $O(1)$. Also clearly, the expected number of visits to each $H_i$ is $O(1)$, thus the expected number of visits to $|T_w |$ is $O(\log K)$. Next, for $u\in \mathcal{H} \setminus \cup_{i=1}^k H_i$, we try to bound $N_u$, which is the number of visits to $T_u \cap \mathcal{A}$ before $\tau$. Observe that $\E_v N_u = \P_v(\tau_u < \tau) \E_u(N_u)$. In addition, by a simple application of large deviation principle,
$$\E_u N_u \leq O(1) \sum_{i=\ell/4}^{\ell} \P(|Z_i|\leq \sqrt{\ell}) = O(K^{-2})\,.$$
where $Z_i$ is a sum of $i$ independent Bernoulli variables (taking values $\pm 1$) which has bias $1/6$. Therefore, we obtain that (denoting by $\mathcal{N}$ the total number of visits to $\mathcal{A}$ before $\tau$)
$$\E_v \mathcal{N} \leq O(\log K) + \sum_{u\in \mathcal{H}\setminus \{w\}} \E_v N_u \leq O(\log K) + K O(K^{-2}) = O(\log K)\,.$$
This implies that for the random walk on $\tilde {G}$, we have $\E_v \tau = O(K \log K)$ uniformly for all $v\in T$. Since the maximal hitting time for a 3D torus of volume $K$ is $O(K)$, this yields that $\max_{v\in V} \E_v \tau = O(K \log K)$, completing the proof of the lemma together with the fact $t_{\mathrm{mix}}(\tilde G, v^*) = O(K)$ for all $v^*\in V^*$.
\end{proof}
\begin{proof}[Proof of Theorem~\ref{thm-main}]
Combining the preceding two lemmas, we obtain a ratio of order $\log n/\log\log n$ for the mixing times after the perturbation of the conductances, thereby completing the proof of Theorem~\ref{thm-main}.
\end{proof}
 We remark that a slightly more careful analysis would lead to a ratio of order $\log n/\sqrt{\log\log n}$. In addition, by iterating the aforementioned construction, one could obtain a ratio of order $\log n/\log^{(j)} n$ for any fixed $j\in \mathbb{N}$, where $\log^{(j)} n$ is the iterated logarithm of order $j$.

Finally, we provide a proof of Proposition~\ref{prop}, which are simple consequences of known results.
\begin{proof}[\bf Proof of Proposition~\ref{prop}]
 (1) Torus $\mathbb{Z}_n^d$: the upper bound $C_d n^2$ can be deduced from Lov{\'a}sz-Kannan bound (\cite{LK99}) which is robust, and the lower bound $c_d n^2$ is given by the inverse spectral gap, which is also robust. (2) Erd\H{o}s-R\'enyi graph in supercritical case: the upper bound $O(\log^2 n)$ is given by the robust Fountoulakis-Reed bound (\cite{FR07, FR08}, see also \cite{BKW06}) and the lower bound is due to the fact that there is an induced path of length $\Theta(\log n)$. (3) Erd\H{o}s-R\'enyi graph in critical case: the upper bound $O(n)$ is given by the maximal commute time (\cite{NP08}) which is robust, while the proof of the lower bound in \cite{NP08} is also robust.(4) Hypercube: the upper bound $O(n \log n)$ is given by the log-Sobolev constant (see, e.g.,  \cite{DS96}) which is robust. For the lower bound, one could employ the coupon collecting argument. Note that after changing the conductance, the random walk does not update each coordinate in a uniform rate. Instead, each coordinate will be selected to update with probability of order $1/n$. Therefore, there exists a constant $c>0$ such that after $cn\log n$ steps the random walk started at $\underline{0}$ has at least $\frac{n}{2} + n^{2/3}$ 0's with high probability. Since in the stationary measures (for both original hypercube and after perturbation) such configurations have negligible probability, this implies that the mixing time is larger than $cn\log n$.
\end{proof}

\noindent {\bf Acknowledgment.} We are grateful to Riedi Basu, Itai Benjamini, Laura Florescu, Shirshendu Ganguly, Gady Kozma and Jeffrey Steif for helpful comments.
\small

\end{document}